\documentclass{ifacconf}

\usepackage{graphicx}      
\usepackage{natbib}        
\usepackage{amsmath}
\usepackage{amsfonts}
\usepackage{amssymb}
\usepackage{mathtools}
\usepackage{bm}
\usepackage{float}
\usepackage{import}
\usepackage{color}
\usepackage{url}

\usepackage{etoolbox}

\DeclareMathSizes{10}{9}{7}{5}
\AtBeginEnvironment{equation}{\small}
\AtBeginEnvironment{align}{\small}
\AtBeginEnvironment{gather}{\small}
\AtBeginEnvironment{multline}{\small}
\AtBeginEnvironment{flalign}{\small}

\newtheorem{remark}{Remark}
\newtheorem{assumption}{Assumption}
\newtheorem{theorem}{Theorem}
\newtheorem{proof}{Proof}
\newtheorem{lemma}{Lemma}

\newcommand{\IdentityMatrix}[1]{\bm{I}}
\newcommand{\ZerosMatrix}[2]{\ensuremath{\bm{0}}}

\newcommand{\GR}[1]{\textcolor{black}{#1}}

\newcommand{\DNN}[1]{\textcolor{black}{#1}}

\begin{document}
\begin{frontmatter}

\title{Optimal $\mathcal{W}_\infty$ Control of Prandtl-Ishlinskii Hysteresis Model via Weak Derivatives\thanksref{footnoteinfo}} 

\thanks[footnoteinfo]{This work was supported in part by the  CAPES through the Academic Excellence Program (PROEX), CNPq under Grants 317058/2023-1 and 422143/2023-5, FAPEMIG under Grant BPD-00960-22, and in part by Petrobras/ANP under Grants 2023/00494-5 and 2023/00643-0.}

\author[First]{Daniel N. Cardoso} 
\author[Second]{Petrus E. O. G. B. Abreu} 
\author[First,Third]{Guilherme V. Raffo}

\address[First]{Graduate Program in Electrical Engineering, Federal University of Minas Gerais, Belo Horizonte, Brazil\\(danielneri@ufmg.br, raffo@ufmg.br).}
\address[Second]{Graduate Program in Health Sciences: Infectious Diseases and Tropical Medicine, Federal University of Minas Gerais, Belo Horizonte, Brazil (petrusabreu@ufmg.br).}
\address[Third]{Department of Electronic Engineering, Federal University of Minas Gerais, Belo Horizonte, Brazil.}

\begin{abstract}                
\GR{This work proposes a novel robust optimal} $\mathcal{W}_\infty$ controller for \GR{dynamic systems with Prandtl-Ishlinskii hysteresis.} By \GR{utilizing} weighted Sobolev spaces $\mathcal{W}_{m,p,\Gamma}$, the approach uses \GR{weak derivatives to rigorously handle the} non-differentiable, input non-affine nature of hysteresis. This formulation recasts the Prandtl-Ishlinskii operator as a bounded uncertainty multiplying the input rate, enabling \GR{robust optimal} controller design via linear matrix inequalities, while guaranteeing $\mathcal{W}_{3,2,\Gamma}$-stability with a $\mathcal{W}_\infty$-gain bound. \GR{A numerical study on} a piezoelectric actuator model validates its effectiveness, demonstrating asymptotic tracking and the attenuation of both hysteresis and external disturbances through a straightforward implementation.

\end{abstract}

\begin{keyword}
\GR{Hysteresis nonlinearity; Prandtl-Ishlinskii operator; Non-differentiable and non-affine systems; Optimal Control; Weighted Sobolev spaces}
\end{keyword}

\end{frontmatter}
\sloppy
\section{Introduction}
\label{Sec:Introduction}
\vspace{-2mm}

\DNN{Hysteresis is a nonlinear phenomenon inherent to many dynamical systems and diverse engineering applications. Its key features include a memory effect, where output depends on the current and past input evolution; multiple steady-state equilibria under identical input conditions; and a highly nonlinear dependence on the input's magnitude and rate \GR{\citep{Morris2011}}.}
\DNN{These characteristics introduce significant challenges for both system identification and control \citep{Bequette1991, Pop_etl2018}. Common physical systems exhibiting hysteresis include magnetorheological dampers, piezoelectric actuators, and pneumatic control valves \citep{Abreu_etal2024}.}

\DNN{In control applications, unmodeled hysteresis degrades performance, causing oscillations, tracking errors, or even instability \citep{Rakotondrabe2013}. While phenomenological models like Bouc-Wen \citep{Wen1976} and Prandtl-Ishlinskii \citep{Kuhnen2003} effectively capture this behavior \citep{Ismail_etal2009}, they yield input non-affine and non-differentiable systems \citep{Abreu_etal2024}, complicating controller design and stability analysis \citep{Ikhouane_Rodellar2007}. Since these systems face external disturbances and modeling uncertainties, robust control is essential \citep{Hassani_etal2014}. Feedforward inverse compensation attempts to linearize the dynamics \citep{AlJanaideh_etal2011}; however, as \citet{Tao_Kokotovic1995} discuss, these methods demand precise, invertible models and are highly sensitive to parameter uncertainties, where minor errors can cause severe performance degradation or instability.}

\DNN{Adaptive control compensates for hysteresis via online parameter adjustment \citep{Tao_Kokotovic1995,Chen_etal2008}, but typically assumes a specific model structure and struggles with unmodeled dynamics. Robust control schemes address hysteresis-induced uncertainties \citep{Liu_etal2014_HD,Riccardi_etal2014}, yet the complex memory dependence complicates controller design, often requiring restrictive assumptions and yielding conservative results \citep{Gu_etal2016Survey}. These limitations necessitate a general mathematical framework capable of handling hysteresis while guaranteeing stability and performance. Thus, this work proposes a novel robust control strategy within the framework of weighted Sobolev spaces \citep{treves2016topological}. Denoted by $\mathcal{W}_{m,p}$, these spaces comprise functions in $\mathcal{L}_p$ whose weak derivatives up to order $m$ also belong to $\mathcal{L}_p$. This formulation appropriately handles non-differentiable and non-affine hysteresis models by employing weak derivatives in the design process, resulting in the proposed robust $\mathcal{W}_\infty$ controller.}

The use of Sobolev spaces in optimal control design is not a new concept. It has been introduced by \citet{aliyu2011extending} to incorporate the rate of the cost variable into the cost functional, aiming to improve transient performance. Specifically, the $\mathcal{W}_{1,2}$-norm of the cost variable \GR{has been} employed instead of the classical $\mathcal{L}_2$-norm to formulate the cost functional. By applying dynamic programming, the resulting optimal control problem \GR{has been} transformed into the associated Hamilton--Jacobi (HJ) equation. However, obtaining analytical solutions to this HJ partial differential equation (PDE) \GR{remains challenging} for general nonlinear systems.

In particular, \cite{aliyu2011extending} \GR{have claimed} that the HJ PDE arising from the $\mathcal{W}_\infty$ control formulation in the Sobolev space is analytically intractable due to its inherent complexity. To circumvent this issue, a backstepping-based approach \GR{has been} proposed to simplify the problem. However, the resulting controller proved to be structurally similar to the classical nonlinear $\mathcal{H}_\infty$ formulation, differing primarily by the addition of an integrator in the cost variable. Consequently, no significant improvement in transient performance \GR{has been} achieved.

\DNN{Subsequent research has developed practical solutions for the $\mathcal{W}_\infty$ framework. To solve the associated HJ equation, \cite{cardoso2018approximated} proposed a Galerkin-based approximation algorithm. \cite{AUT2019} demonstrated that this formulation enables component-wise tuning of the cost variable and its derivatives. Adopting a specific cost functional simplifies the HJ equation for mechanical systems, yielding analytical solutions. Compared to classical nonlinear $\mathcal{H}_\infty$ controllers, the $\mathcal{W}_\infty$ controller achieves superior transient performance, faster disturbance attenuation, inherent robustness, and straightforward implementation \citep{AUT2019}. Finally, \cite{danielECC2024} showed the framework offers a direct method to increase system order, effectively handling input non-affine dynamics.}

\GR{Therefore, the} main contribution of this work lies in the development of a robust $\mathcal{W}_\infty$ controller for hysteretic systems. We \GR{demonstrate} that the weak derivatives inherent to the Sobolev norm provide straightforward and effective means of handling the \GR{non-differentiable} and \GR{non-affine} Prandtl-Ishlinskii operator used \GR{for} hysteresis phenomena \GR{representation}. \GR{Specifically}, this operator is recast as a bounded uncertainty multiplying the input rate, \GR{which enables a synthesis based on linear matrix inequality (LMI)}. The resulting controller \GR{guarantees} $\mathcal{W}_{3,2,\Gamma}$-stability \GR{along} with an $\mathcal{W}_\infty$-gain bound.


\textbf{Notation and definitions:}  Italic lowercase letters denote scalars, boldface italic lowercase letters denote vectors, and boldface italic uppercase letters denote matrices. {\small$(\cdot)^T$\normalsize} and {\small$(\cdot)^{-1}$\normalsize} stand, respectively, for the transpose and inverse elements of \small$(\cdot)$\normalsize. \small$\mathbb{N} \triangleq \{1,2,...\}$\normalsize, \small$\mathbb{R} \triangleq (-\infty, \infty)$, $\mathbb{R}_{\geq 0} \triangleq [0, \infty )$\normalsize, \small$\mathbb{R}_{> 0} \triangleq (0, \infty )$\normalsize, \small$\mathbb{R}^n \triangleq \{\bm{r} = [r_1 \; ... \; r_n]':r_i \in \mathbb{R}\}$\normalsize, and \small$\mathbb{R}^{n\times m} \triangleq \{\bm{R} = [\bm{r}_1 \;... \;\bm{r}_m]:\bm{r}_i \in \mathbb{R}^n, i \in \{1, 2, \cdots, m\}\}$\normalsize. \small$\ZerosMatrix{n}{m}$ \normalsize and \small$\IdentityMatrix{n}$ \normalsize are, respectively, zero and identity matrices with appropriate dimensions. Let \small$t \in \mathbb{R}_{\geq 0}$ \normalsize denote the time variable and 
\small$\bm{z}(t): \mathbb{R}_{\geq 0} \rightarrow \mathbb{R}^{n_z}$ \normalsize be a time-varying function, then \small$\dot{\bm{z}}(t) \triangleq {d\bm{z}(t)}/{dt}$ \normalsize denotes its time derivative.
Let \small$p \in \mathbb{N}\cup\{\infty\}$, $m \in \mathbb{N}$, and $\bm{z}: \Psi \to \mathbb{R}^{n_z}$\normalsize. If \small$\bm{z}$ \normalsize belongs to the weighted Lebesgue space, i.e. \small$\bm{z} \in \mathcal{L}_{p,\bm{\Lambda}}[\Psi]$\normalsize, then its weighted \small$\mathcal{L}_{p}$\normalsize-norm is finite, \small$||\bm{z}||_{\mathcal{L}_{p,\bm{\Lambda}}} \triangleq \left(\int_{\Psi} ||\bm{\Lambda}^{1/p}\bm{z}||_p^p~d\Psi\right)^{{1}/{p}} < \infty$\normalsize, where \small$\bm{\Lambda}$ \normalsize is a positive definite symmetric matrix with appropriate dimension. If \small$\bm{z}$ \normalsize belongs to the weighted Sobolev space, i.e. \small$\bm{z} \in \mathcal{W}_{m,p,\bm{\Gamma}}[\Psi]$\normalsize, then \small$||\bm{z}||_{\mathcal{W}_{m,p,\bm{\Gamma}}} \triangleq \big(\sum_{\alpha = 0}^{m} ||{\partial^{\alpha}\bm{z}}||_{\mathcal{L}_{p,\bm{\Gamma}}}^p\big)^{{1}/{p}} < \infty$\normalsize,  with \small$\bm{\Gamma}  \triangleq \{\bm{\Gamma}_0, ..., \bm{\Gamma}_m\}$\normalsize, where \small${\partial^{\alpha}\bm{z}}$ \normalsize is the \small$\alpha$\normalsize-th weak derivative of \small$\bm{z}$\normalsize. It is well known that the weak derivative coincides with the classical derivative whenever the latter exists. The Lebesgue, \small$\mathcal{L}_{p}$, \normalsize and Sobolev, \small$\mathcal{W}_{m,p}$, \normalsize spaces are recovered as special cases when all weighting matrices are identity.

\vspace{-2mm}
\section{Revisiting $\mathcal{H}_\infty$ Control Toward a Weighted Sobolev Space Formulation}
\vspace{-2mm}
\label{Preliminaries}

Consider the non-autonomous system given by
\small\begin{align}
\label{GenericNonlinearSystem}
\mathcal{P}_1:\begin{cases}
    \dot{\bm{x}}(t) &= f(\bm{x},\bm{u},\bm{w},{t}), \\
    \bm{z}(t) &= h(\bm{x},\bm{u}),~~\bm{x}(0) = \bm{x}_0,
\end{cases}
\end{align}\normalsize
where $t \in \mathbb{R}_{\geq 0}$, $\bm{x}(t): \mathbb{R}_{\geq 0} \to \mathbb{R}^{n_x}$ is the state vector, $\bm{u}(t): \mathbb{R}_{\geq 0} \to \mathbb{R}^{n_u}$ is the control input, $\bm{w}(t): \mathbb{R}_{\geq 0} \to \mathbb{R}^{n_w}$ is the disturbance, and $\bm{z}(t): \mathbb{R}_{\geq 0} \to \mathbb{R}^{n_z}$ is the cost variable, with $n_x, n_u, n_w, n_z \in \mathbb{N}$. \GR{The function} $f(\cdot)$ denotes the vector field of the dynamical system\GR{, while} $h(\cdot)$ \GR{describes} the cost variable mapping.

Classic nonlinear $\mathcal{H}_\infty$ control seeks an admissible law $\bm{u}^* \in \mathcal{U}$ that solves
\vspace{-1mm}\small\begin{align}
    \min_{\bm{u} \in \mathcal{U}}\max_{\bm{w} \in \mathcal{W}} \left(||\bm{z}(t)||^2_{\mathcal{L}_{2}} - \gamma_{\mathcal{H}}^2 ||\bm{w}(t)||^2_{\mathcal{L}_2}\right),
\end{align}\normalsize
where $\mathcal{W} =\mathcal{L}_2[\mathbb{R}_{\geq 0})$. If it exists, $\bm{u}^*$ ensures $\mathcal{L}_2$-stability with gain $\gamma_{\mathcal{H}} > 0$ \citep{van1992sub}, guaranteeing \small$||\bm{z}(t)||^2_{\mathcal{L}_{2}} \leq \gamma_{\mathcal{H}}^2 ||\bm{w}(t)||^2_{\mathcal{L}_2}+ c_1$\normalsize, for all $\bm{w} \in \mathcal{W}$ and $c_1 > 0$. Minimizing $\gamma_{\mathcal{H}}$ restricts the maximum disturbance energy transferred to the cost variable, while $c_1$ captures the energy contribution from initial conditions. However, because $\mathcal{H}_\infty$ control ignores the cost variable's rate of change, the optimal $\mathcal{W}_\infty$ technique was introduced to address this limitation.

In the $\mathcal{W}_\infty$ framework, the cost functional utilizes a weighted Sobolev norm, deriving the optimal control $\bm{u}^*$ by solving
\small\begin{align}
\label{OptimalWinfControlProblem}
    \min_{\bm{u} \in \mathcal{U}} \max_{\bm{w} \in \mathcal{W}} \left(||\bm{z}(t)||^2_{\mathcal{W}_{m,p,\bm{\Gamma}}} {-} \gamma_{\mathcal{W}}^2 ||\bm{w}(t)||^2_{\mathcal{L}2}\right).
\end{align}\normalsize
If it exists, $\bm{u}^*$ ensures $\mathcal{W}_{m,p,\Gamma}$-stability with gain $\gamma_{\mathcal{W}} \geq 0$ \citep{AUT2019}, satisfying \small$||\bm{z}(t)||^2_{\mathcal{W}_{m,p,\bm{\Gamma}}} \leq \gamma_{\mathcal{W}}^2 ||\bm{w}(t)||^2_{\mathcal{L}_2} + c_2$\normalsize, for all $\bm{w} \in \mathcal{W}$ and $c_2 > 0$. Here, $\gamma_{\mathcal{W}}$ bounds the disturbance energy transferred to the cost variable and its weak derivatives. Because disturbances manifest in derivatives first, minimizing $\gamma_{\mathcal{W}}$ yields a highly responsive controller that accelerates attenuation and smooths transients, improving convergence, precision, and energy efficiency while reducing overshoot.


\vspace{-2mm}
\section{Mathematical Modeling of Systems with Hysteresis}
\vspace{-2mm}
\label{SubSec:Modeling}

A common approach to represent systems with hysteresis consists of a cascade structure composed of a dynamic model $\mathcal{G}$, describing the linear behavior of the system $\mathcal{S}$, preceded by a model $H(\cdot)$ that captures the hysteresis behavior.
A block diagram illustrating this structure is shown in Fig.~\ref{Fig:SubSecModel:PositioningSystem}.

\begin{figure}[H]
	\centering{
			\def\svgwidth{1\columnwidth}		
			\footnotesize{
					\import{Figures/}{BlockDiagram_PositioningSystems2.pdf_tex } \vspace{-6mm}}
			\caption{Schematic representation of the system $\mathcal{S}$ with hysteresis nonlinearity $H(\cdot)$ and linear dynamics $\mathcal{G}$.}
			\label{Fig:SubSecModel:PositioningSystem}
		}
\end{figure}

\GR{Following established} modeling approaches, we assume the linear dynamics of the system \GR{are governed} by the state-space representation \citep{Riccardi_etal2013}
\small\begin{align}
\label{Eq:SubSecModel:HystereticDynamicSystem}
	\mathcal{G}:\begin{cases}
		\dot{d}(t)  &=  v(t), \\ 
		\dot{v}(t)  &=  b_0\big[\nu(t) {+} \zeta(t)\big]{-}a_0d(t) {-}a_1v(t),\\
		y(t)    &=  d(t),
	\end{cases}
\end{align}\normalsize
where $d: \mathbb{R}_{\geq 0} \to \mathbb{R}$ and $v: \mathbb{R}_{\geq 0} \to \mathbb{R}$ represent scalar functions.
The coefficients $a_0, \;a_1, \; b_0 \in \mathbb{R}$, with $b_0 \neq 0$, are constants. 
The variable $y: \mathbb{R}_{\geq 0} \to \mathbb{R}$ denotes the system output, and 
$\zeta: \mathbb{R}_{\geq 0} \to \mathbb{R}$ encapsulates the cumulative impact of external disturbances, 
parametric uncertainties, and unmodeled dynamics.
Meanwhile, $\nu: \mathbb{R}_{\geq 0} \to \mathbb{R}$ denotes the input of the linear dynamic model ${\cal G}$ 
and is obtained through the transformation of the system input $u: \mathbb{R}_{\geq 0} \to \mathbb{R}$ by the function $H(\cdot)$, which characterizes the hysteresis behavior.

\begin{assumption}
    The state vector $\bm{x} = [d \;\; v]^T$ is available for control design purposes.
\end{assumption}

In this work, the hysteresis behavior $H(\cdot)$ is modeled using the Prandtl-Ishlinskii operator, which is defined as a weighted sum of elementary components, often referred to as play operators \citep{Kuhnen2003}. \GR{Fig. \ref{Fig:SubSecModel:PlayOperator} provides a graphical description of the input-output relationship  for this operator}. Each play operator \GR{$P_{r_i}:\mathbb{R}_{\geq 0} \to \mathbb{R}$} is characterized by a radius $r_i \in \mathbb{R}_{\geq 0}$\GR{. For a continuous input $u(t)$, the $i$-th play operator is mathematically defined as}
%
\small\begin{align}
	\nu_{{r}_i}(t) =& \max\!\big(\!\min(u(t)+r_i,\,\nu_{{r}_i}(t^{-})),\,u(t)-r_i\big), \nonumber \\[1mm]
	\triangleq& P_{{r}_i}\big(u(t); \nu_{{r}_i}(t^{-})\big), \label{Eq:SubSecModel:PlayOperator}
\end{align}\normalsize
where $\nu_{{r}_i}(t)$ and $\nu_{{r}_i}(t^{-})$ represent the state of the play operator at time $t$ and immediately before $t^{-}$, respectively. 

\begin{figure}[H]
	\centering{
		\def\svgwidth{0.5\columnwidth}		
		\scriptsize{
			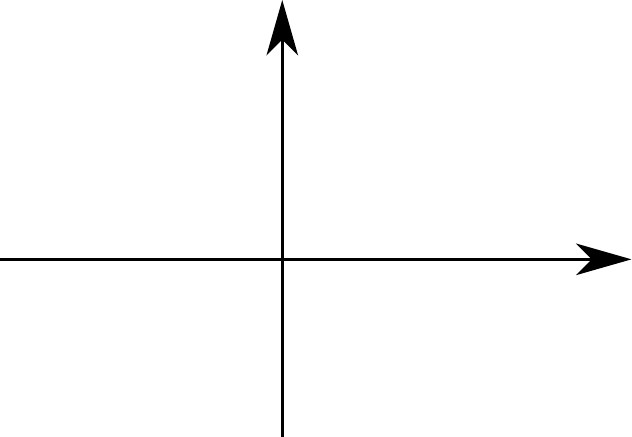\vspace{-2mm}}  
		\caption{Input-output mapping of the play operator $P_{{r}_i}$\GR{where} $u(t)$ is the input signal, and $\nu_{r_{i}}(t)$ denotes the state values of the $i$-th operator with radius $r_{i}$.}
		\label{Fig:SubSecModel:PlayOperator}
	}
\end{figure}

\GR{T}he mapping illustrated in Fig.\,\ref{Fig:SubSecModel:PlayOperator}, which corresponds to \GR{the} play operator \eqref{Eq:SubSecModel:PlayOperator}, \GR{identifies} two distinct modes that describe \GR{the evolution operator's} state over time. \GR{The first mode is the linear region}, where the state varies linearly, with a \GR{possible} translation, \GR{given} by $\nu_{{r}_i}(t){=}u(t) \pm r_i$, and represented by the solid blue lines in Fig.\,\ref{Fig:SubSecModel:PlayOperator}. The \GR{second} mode, known as the play region, is characterized by a constant state value, $\nu_{{r}_i}(t){=}\nu_{{r}_i}(t^{-})$, and \GR{is} indicated by dashed lines in Fig. \ref{Fig:SubSecModel:PlayOperator} \citep{Esbrook_etal2014}. These two modes are critical \GR{for} structuring the optimal control formulation presented in Section \ref{Subsec:PositioningSystem:ModelingDetails}.

\GR{Considering the Prandtl-Ishlinskii operator composed of $n_{\rm r}$ play operators $P_{r_{i}}$, as defined in (\ref{Eq:SubSecModel:PlayOperator}), }the output of $H(\cdot)$ can be expressed as
\small\begin{equation}
	\label{Eq:SubSecModel:PIOperator}
	\nu(t) = H\big(u(t);\, \bm{\nu}_{{r}}(t^{-})\big)=\textstyle\sum_{i=1}^{n_{\rm r}}\theta_iP_{r_{i}}\big(u(t);\, \nu_{r_{i}}(t^{-})\big),
\end{equation}\normalsize
where the vector $\bm{\nu}_{{r}}(t) \triangleq [\nu_{r_{1}}(t) \; \nu_{r_{2}}(t) \;\cdots \; \nu_{r_{n_{\rm r}}}(t)]^T$
comprises the state variables $\nu_{r_{i}}(t)$ of the $n_{\rm r}$ play operators at the current time $t$, and the vector
$\bm{\nu}_{{r}}(t^{-}) \triangleq [\nu_{r_{1}}(t^{-})\; \nu_{r_{2}}(t^{-})\; \cdots\; \nu_{r_{n_{\rm r}}}(t^{-})]^T$
represents the state variables at the instant immediately preceding $t$.
\begin{assumption}
Each weight $\theta_i \in \mathbb{R}_{>0}$ is bounded and positive.
\end{assumption}
\begin{assumption}
\label{Assumption3}
    The radii $r_{i} \in \mathbb{R}_{\geq 0}$ satisfy $0 = r_{1} < r_{2} < \cdots < r_{n_{\rm r}} < \infty$.
\end{assumption}
 
\begin{remark}
\label{RemarkCtrb}
    Assumption \ref{Assumption3} ensures that $r_{1}=0$, \GR{which implies} $\nu_{{r}_1}(t){=}u(t)$ for all $t \in \mathbb{R}_{\geq 0}$. Consequently, at any time instant, at least one operator in the model \eqref{Eq:SubSecModel:PIOperator} \GR{has its} state within the linear region. This condition is crucial \GR{for guaranteeing} the controllability of the system \citep{Esbrook_etal2014}.
\end{remark}


Then, we define
\small
\begin{equation}
	\label{Eq:SubsecModel:PlayOperatorVector}
	\bm{\nu}_{{r}}(t) \triangleq \mathcal{P}\big(u(t);\, \bm{\nu}_{{r}}(t^{-})\big) = [P_{r_{1}}\; P_{r_{2}}\; \cdots\; P_{r_{n_{\rm r}}}]^T,
\end{equation}
\normalsize
which allows \eqref{Eq:SubSecModel:PIOperator} to be written compactly as
\small
\begin{equation}
	\label{Eq:SubsecModel:PIOperatorVector}
	\nu(t) = H\big(u(t);\, \bm{\nu}_{{r}}(t^{-})\big) = \bm{\theta}^T\bm{\nu}_{{r}}(t),
\end{equation}
\normalsize
where $\bm{\theta} \triangleq [\theta_1\; \theta_2\; \cdots\; \theta_{n_{\rm r}}]^T$.

\vspace{-2mm}
\GR{\section{Robust Optimal $\mathcal{W}_\infty$ Control Design}}
\vspace{-2mm}
\label{Subsec:PositioningSystem:ModelingDetails}


%
This section formulates the \GR{robust optimal } $\mathcal{W}_\infty$ control problem to asymptotically stabilize system (\ref{Eq:SubSecModel:HystereticDynamicSystem}) at a desired reference, mitigate hysteresis nonlinearity, and attenuate external disturbances. By \GR{introducing} the integral state $\tilde{\epsilon}(t)\triangleq\int_{0}^{t}\tilde{d}(\vartheta)d\vartheta$ to counteract \GR{constant} disturbances, parametric uncertainties, and unmodeled dynamics, system \eqref{Eq:SubSecModel:HystereticDynamicSystem} can be expressed via tracking error dynamics as follows:
\small\begin{align}
\label{Eq:SecControl:ErrorDynamicModel}
	\mathcal{S}_1{:} \begin{cases}
		\dot{\tilde{\epsilon}}(t)  =  \tilde{d}(t),\\ 
		\dot{\tilde{d}}(t)  = v(t),\\
		\dot{v}(t)  =  b_0\big[\nu(t) {+} \zeta(t)\big] {-}a_0\big[ \tilde{d}(t){+}d_{\rm r}\big] {-}a_1v(t), \\
            y(t) = \tilde{d}(t) + d_{\rm r},
	\end{cases}
\end{align}\normalsize
where $\tilde{d}(t)\triangleq d(t)-d_{\rm r}$ denotes the deviation of the variable $d(t)$ from the desired reference $d_{\rm r} \in \mathbb{R}$.

Considering the system \eqref{Eq:SecControl:ErrorDynamicModel} and adopting the $\mathcal{W}_\infty$ framework, the optimal control problem is formulated to determine the \GR{following} control law:
\begin{align}
\label{OptimalControlProblem}
    u^* = \arg{\min_{u \in \mathcal{U}}}\max_{\zeta \in \mathcal{W}} \mathcal{J}_{\mathcal{W}_\infty},
\end{align}
with the cost functional defined as
\small
\begin{align}
\label{CostFunctional}
    \mathcal{J}_{\mathcal{W}_\infty} \triangleq ||z(t)||^2_{\mathcal{W}_{3,2,\bm{\Gamma}}} - \gamma^2 ||\zeta(t)||^2_{\mathcal{W}_{1,2}},
\end{align}
\normalsize
where $z(t)=\tilde{d}(t)$, $\mathcal{U}: \mathbb{R}_{\geq 0} \mapsto \mathbb{R}^{n_u}$, and $ \mathcal{W}\subseteq \mathcal{W}_{1,2}[\mathbb{R}_{\geq 0})$.
The set $\bm{\Gamma}{=}\{\Gamma_0,\Gamma_1,\Gamma_2,\Gamma_3\}$ represents weighting matrices that must be specified.

\begin{lemma}
\label{Lemma1}
    The optimal control law \eqref{OptimalControlProblem} guarantees that the closed-loop system
    (\ref{Eq:SecControl:ErrorDynamicModel}) satisfies 
    \small
   \begin{align}
        ||z(t)||^2_{\mathcal{W}_{3,2,\bm{\Gamma}}} &\leq \gamma^2 ||\zeta(t)||^2_{\mathcal{W}_{1,2}} + c, \label{CostFunctional22}
    \end{align}
    \normalsize
    for some $c \in \mathbb{R}$. Therefore, the closed-loop system is $\mathcal{W}_{3,2,\bm{\Gamma}}$-stable
    with $\mathcal{W}_{3,2,\bm{\Gamma}}$-gain $\gamma$.
\end{lemma}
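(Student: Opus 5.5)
The plan is the standard dissipativity argument for min--max ($\mathcal{H}_\infty$-type) problems, applied to the weighted Sobolev cost \eqref{CostFunctional} instead of the $\mathcal{L}_2$ cost. First I will rewrite $\|z\|^2_{\mathcal{W}_{3,2,\bm{\Gamma}}}$ as an integral of a quadratic form in the closed-loop signals. Since $z=\tilde d$, the dynamics \eqref{Eq:SecControl:ErrorDynamicModel} give $\partial z=v$ and $\partial^2 z=\dot v$. In the last relation, $\dot v$ contains $\nu$ and $\zeta$, so $\partial^3 z$ contains the weak derivatives $\partial\nu$ and $\partial\zeta$. These exist because $\zeta\in\mathcal{W}\subseteq\mathcal{W}_{1,2}$. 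For $\nu$, I will use that each play operator \eqref{Eq:SubSecModel:PlayOperator} is Lipschitz in $u$, so $\nu=\bm\theta^T\bm\nu_r$ is absolutely continuous whenever $u$ is. This is the reason for using weak derivatives: the integrand $\Gamma_0\tilde d^2+\Gamma_1 v^2+\Gamma_2(\partial v)^2+\Gamma_3(\partial^2 v)^2$ is well defined almost everywhere even though $H(\cdot)$ is non-differentiable.

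Next, assume the min--max problem \eqref{OptimalControlProblem} admits a saddle solution $u^*$. I will introduce the associated value (storage) function $V(\bm\xi)\ge 0$ on an augmented state $\bm\xi$, for instance $[\tilde\epsilon\ \tilde d\ v\ \nu]^T$ augmented by the play-operator states. The augmentation by $\nu$ lets $\partial u$ enter as the new input, which makes the rate of $u$ the decision variable. By the dynamic programming principle, $V$ satisfies the Hamilton--Jacobi--Isaacs inequality
\begin{equation*}
\dot V + \Gamma_0\tilde d^2+\Gamma_1 v^2+\Gamma_2(\partial v)^2+\Gamma_3(\partial^2 v)^2 - \gamma^2\big(\zeta^2+(\partial\zeta)^2\big)\le 0
\end{equation*}
along closed-loop trajectories under $u^*$, for every admissible $\zeta$. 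The inequality holds almost everywhere in $t$, since $V$ composed with an absolutely continuous trajectory is absolutely continuous.

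Finally, I will integrate this inequality from $0$ to $T$ and use $V\ge 0$ to obtain
\begin{equation*}
\int_0^T(\cdots)\,dt\le \gamma^2\int_0^T\big(\zeta^2+(\partial\zeta)^2\big)dt+V(\bm\xi(0)).
\end{equation*}
Letting $T\to\infty$ by monotone convergence gives \eqref{CostFunctional22} with $c=V(\bm\xi(0))$, which depends only on the initial conditions. $\mathcal{W}_{3,2,\bm\Gamma}$-stability with gain $\gamma$ then follows by definition, exactly as in the $\mathcal{W}_\infty$ setting of Section~\ref{Preliminaries} \citep{AUT2019}.

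I expect the main obstacle to be justifying the dissipation inequality rigorously given the memory in $H(\cdot)$. The value function depends on the operator states $\bm\nu_r(t^-)$, and $\partial\nu$ switches between $\bm\theta$-weighted multiples of $\partial u$ in the linear region and $0$ in the play region. I will handle this by noting that $\partial\nu=\kappa(t)\,\partial u$ with $\theta_1\le\kappa(t)\le\sum_i\theta_i$, where the lower bound comes from Remark~\ref{RemarkCtrb}. The inequality will therefore be required to hold for all $\kappa$ in this interval, i.e., treated as a bounded uncertainty. Arguing that the saddle value is finite and nonnegative, so that $c$ is finite, is the remaining point. If needed, I would simply take existence of such a storage function as the content of ``the optimal control law exists.''
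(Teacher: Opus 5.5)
Your argument is sound in outline, but it takes a different and heavier route than the paper. The paper builds no storage function; it uses the min--max definition directly. With $u^*$ fixed, the worst-case $\zeta^*$ maximizes $\mathcal{J}_{\mathcal{W}_\infty}$. Hence every admissible $\zeta$ applied together with $u^*$ gives $\|z\|^2_{\mathcal{W}_{3,2,\bm{\Gamma}}}-\gamma^2\|\zeta\|^2_{\mathcal{W}_{1,2}}\le\mathcal{J}^*_{\mathcal{W}_\infty}$. This is \eqref{CostFunctional22} with $c=\mathcal{J}^*_{\mathcal{W}_\infty}$, which plays the role of your $V(\bm\xi(0))$.

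That one-line argument needs no Markovian state, no dynamic programming principle, no regularity of a value function, and nothing about $H(\cdot)$. The obstacles you anticipate (play-operator memory, switching of $\partial\nu$) therefore come from your route, not from the lemma. In the limit $T\to\infty$, your integrated inequality is just $\mathcal{J}(u^*,\zeta)\le\sup_{\zeta}\mathcal{J}(u^*,\zeta)$, reached indirectly.

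What your route buys is a pointwise-in-time dissipation inequality. That gives causal bounds on every finite horizon $[0,T]$ and an interpretation of $c$ as the energy stored at $t=0$. This is exactly the mechanism the paper switches to in Theorem~\ref{Teorema1}, with the quadratic storage $\bm\xi^T\bm X^{-1}\bm\xi$ and the uncertain $\bar\theta$. That is also where your $\kappa$-robust condition naturally belongs.

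If you keep your route, three technical points need fixing.

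First, $\zeta$ and $\partial\zeta$ both enter the running cost, so the state for the dynamic programming principle must include $\zeta$, with $\partial\zeta$ as the new disturbance input. Then $V\ge 0$ is no longer automatic. From a state with current disturbance value $\zeta$, the continuation $\zeta e^{-(t-T)}$ only gives $V\ge-\gamma^2\zeta^2$. You should drop the terminal term using $\zeta(T)\to 0$, which holds for $\zeta\in\mathcal{W}_{1,2}[\mathbb{R}_{\geq 0})$.

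Second, a merely continuous $V$ composed with an absolutely continuous trajectory need not be absolutely continuous. Work with the integral form $V(\bm\xi(0))\ge\int_0^T(\cdots)\,dt+V(\bm\xi(T))$ of the dynamic programming principle instead of the differential HJI inequality.

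Third, requiring the inequality for all $\kappa\in[\theta_1,\sum_i\theta_i]$ does not follow from optimality of $u^*$ for the true hysteretic system. It is an extra hypothesis, not a consequence of the lemma's assumption, and it is unnecessary once the play-operator states are part of the state.
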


\begin{proof}
    \GR{Assuming} the control problem is feasible\GR{, which guarantees the existence of} an optimal control law \eqref{OptimalControlProblem}, there exists an optimal cost $\mathcal{J}^*_{\mathcal{W}_\infty} \in \mathbb{R}$\GR{. This cost is} associated with an optimal control law $u^*:\mathbb{R}_{\geq 0} \to \mathbb{R}$ and a worst-case disturbance $\zeta^*:\mathbb{R}_{\geq 0} \to \mathbb{R}$, $\zeta^* \in \mathcal{W}_{1,2}[\mathbb{R}_{\geq 0})$, such that
    \small
\begin{align}
\nonumber
\mathcal{J}^*_{\mathcal{W}_\infty} &= {\min_{\partial u \in \mathcal{U}}}\max_{\partial \zeta \in \mathcal{W}} \left(||z(t)||^2_{\mathcal{W}_{3,2,\bm{\Gamma}}} - \gamma^2 ||\zeta(t)||^2_{\mathcal{W}_{1,2}}\right), \\
\label{CostFunctional1}
    &= ||z^*(t)||^2_{\mathcal{W}_{3,2,\bm{\Gamma}}} - \gamma^2 ||\zeta^*(t)||^2_{\mathcal{W}_{1,2}},
\end{align}
\normalsize
where ${z}^*$ denotes the optimized cost variable. Since $\zeta^*$ is the worst-case disturbance, i.e., the one that maximizes the cost functional, for any generic disturbance $\zeta \in \mathcal{W}_{1,2}[\mathbb{R}_{\geq 0})$, the following inequality holds:
\small
\begin{align}
\nonumber
    \mathcal{J}^*_{\mathcal{W}_\infty} &\geq ||z^*(t)||^2_{\mathcal{W}_{3,2,\bm{\Gamma}}} - \gamma^2 ||\zeta(t)||^2_{\mathcal{W}_{1,2}}, \\
    ||z^*(t)||^2_{\mathcal{W}_{3,2,\bm{\Gamma}}} &\leq \gamma^2 ||\zeta(t)||^2_{\mathcal{W}_{1,2}} + \mathcal{J}^*_{\mathcal{W}_\infty}. \label{CostFunctional2}
\end{align}
\normalsize
Therefore, according to Definition 2 in \citet{AUT2019}, the closed-loop system is $\mathcal{W}_{3,2,\bm{\Gamma}}$-stable with $\mathcal{W}_{3,2,\bm{\Gamma}}$-gain $\gamma$. \qed
\end{proof}

\begin{remark}
    The cost functional \eqref{CostFunctional} is defined in terms of the weighted Sobolev norm of $z(t)$. As will be shown next, this feature allows us to carry out the control design using the weak derivative of the Prandtl-Ishlinskii operator.
\end{remark}

\GR{Now}, taking into account \eqref{Eq:SecControl:ErrorDynamicModel}, we introduce the following variable transformation\footnote{For simplicity, throughout the manuscript some functional dependencies are omitted.}:
\small
\begin{align}
\label{StatesTransformedSystem}
    z &=\tilde{d}, \\
    \partial^1 z &= \partial^1\tilde{d} = v,\\
    \partial^2 z &= \partial^2\tilde{d} = \partial^1 v = \eta,\\
    \partial^3 z &= \partial^3\tilde{d} = \partial^2 v = \partial^1 \eta = -a_0 v {-} a_1 \eta {+} b_0 \partial^{1} \left(\nu {+} \zeta\right), 
    \label{StatesTransformedSystem2}
\end{align}
\normalsize
where, in view of \eqref{Eq:SubsecModel:PIOperatorVector}, we obtain
\small
\begin{align}
	\label{Eq:SecControl:DerivativePIOperatorVector} 
	\partial^{1}\Big(\nu + \zeta\Big) &=  \partial^{1}\Big( H\big( u(t);\, \bm{\nu}_{{r}}(t^{-}) \big)\Big) + \partial^{1}\zeta,\nonumber \\
	&= \bm{\theta}^T\partial^{1}{\bm{\nu}}_{r}(t)+ \partial^{1}\zeta,
\end{align}
\normalsize
with $\bm{\nu}_{r}(t)$ given in (\ref{Eq:SubsecModel:PlayOperatorVector})\GR{. Thus}, its weak derivative can be expressed as
\begin{gather}
\label{EquationDerivativeOperator}
	\partial^{1}{\bm{\nu}}_{r}(t) =
	\begin{bmatrix}
		\partial^{1}{P}_{r_1}\big(u(t);\, \nu_{r_1}(t^{-})\big) \\
		\vdots  \\
		\partial^{1}{P}_{r_{n_{\rm r}}}\big(u(t);\, \nu_{r_{n_{\rm r}}}(t^{-})\big)
	\end{bmatrix} \!\!.
\end{gather}
The weak derivative of the Prandtl-Ishlinskii operator ${P}_{r_i}$ is derived in the following lemma.

\begin{lemma}
\label{Lemma2}
    Let ${P}_{r_i}(t):\mathbb{R}_{\geq 0} \to \mathbb{R}$ denote the Prandtl-Ishlinskii operator defined in \eqref{Eq:SubSecModel:PlayOperator}, where $t \in \mathbb{R}_{\geq 0}$ \GR{is} the time instant. Partition the time domain $\mathbb{R}_{\geq 0}$ into two disjoint subsets, $\Pi_i$ and $\Pi_i^c$, such that $\mathbb{R}_{\geq 0} = \Pi_i \cup \Pi_i^c$\GR{. Let} $\Pi_i$ denote the set of time instants during which the play operator $P_{r_i}$ \GR{is} in its linear region, \GR{with} $\Pi_i^c$ \GR{being} its complement. Then, the weak derivative of ${P}_{r_i}$ is given by
    \begin{align}
    \label{Eq:SecControl:DerivativePlayOperator}
	\partial^{1}{P}_{r_i}(t) = \begin{cases}
		\dot{u}, ~\text{if}~P_{r_i}  \in \Pi_i,     \\ 
		0,      ~ \text{if}~P_{r_i}  \in  \Pi_i^c.
	\end{cases}
    \end{align}
\end{lemma}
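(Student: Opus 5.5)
Since the weak derivative is defined through integration by parts, we will show that $P_{r_i}$ is absolutely continuous on every compact interval and then compute its a.e.\ derivative. Absolute continuity makes the a.e.\ classical derivative equal to the weak derivative.

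We assume $u$ is locally absolutely continuous (for instance $u\in\mathcal{W}_{1,2}$ locally), so $\dot u$ exists a.e. The first step is the standard Lipschitz property of the play operator. For two instants $s<t$, the update rule \eqref{Eq:SubSecModel:PlayOperator} is the projection of the previous state onto the interval $[u(t)-r_i,\,u(t)+r_i]$. Since this projection is nonexpansive, and the interval's endpoints move by at most $|u(t)-u(s)|$, we get $|\nu_{r_i}(t)-\nu_{r_i}(s)|\le \max_{\tau\in[s,t]}|u(\tau)-u(s)|$. This gives the bound $\sum_k|\nu_{r_i}(t_k)-\nu_{r_i}(s_k)|\le \sum_k \mathrm{osc}_{[s_k,t_k]}u$. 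Absolute continuity of $u$ makes the right-hand side small for partitions of small total length, so $P_{r_i}$ is absolutely continuous. Hence $P_{r_i}$ has a classical derivative a.e., and for every test function $\phi\in C_c^\infty(\mathbb{R}_{>0})$ we have $\int P_{r_i}\dot\phi\,dt=-\int \dot P_{r_i}\phi\,dt$. So the weak derivative $\partial^1 P_{r_i}$ is that a.e.\ derivative.

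The second step identifies the a.e.\ derivative on the partition $\Pi_i\cup\Pi_i^c$. On $\Pi_i$, the state satisfies $\nu_{r_i}(t)=u(t)\mp r_i$, with $r_i$ constant. The set $\Pi_i$ splits into $\Pi_i^{+}=\{t:\nu_{r_i}=u-r_i\}$ and $\Pi_i^{-}=\{t:\nu_{r_i}=u+r_i\}$ (identical when $r_i=0$). On each piece the absolutely continuous function $\nu_{r_i}-u\pm r_i$ vanishes. We then use the standard fact that if an absolutely continuous $g$ vanishes on a set $E$, then $\dot g=0$ a.e.\ on $E$: at density points of $E$ where $\dot g$ exists, the difference quotients along points of $E$ are zero. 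Therefore $\dot\nu_{r_i}=\dot u$ a.e.\ on $\Pi_i$. On $\Pi_i^c$ the operator is in the play region, where $|u(t)-\nu_{r_i}(t)|<r_i$. By continuity, each such $t$ has a neighborhood on which the projection leaves the state unchanged, so $\nu_{r_i}$ is locally constant there and $\dot\nu_{r_i}=0$. Combining the two cases gives \eqref{Eq:SecControl:DerivativePlayOperator}, understood a.e. The values on a null set are immaterial, since weak derivatives are only defined up to null sets.

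The main obstacle is the boundary behaviour. The set $\Pi_i$ may be complicated, for example a Cantor-like set of instants where the state just touches $u\pm r_i$, and the sets $\Pi_i$ and $\Pi_i^c$ may alternate infinitely often. The density-point lemma handles this without any piecewise-monotonicity assumption on $u$, so that is the route I would take. If it proves cumbersome, the fallback is to assume $u$ piecewise monotone, as is customary for Preisach-type operators, and to treat the finitely many switching instants as a null set.
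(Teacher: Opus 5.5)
Your proof is correct, and it takes a genuinely different route from the paper.

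\textbf{The paper's route.} It checks \eqref{WeakDerivative} directly: it splits $\int P_{r_i}\dot\psi\,dt$ over $\Pi_i$ and $\Pi_i^c$ and integrates by parts on each piece separately. This tacitly assumes finitely many switching instants in the support of $\psi$, and it drops the boundary terms $P_{r_i}(t_k)\psi(t_k)$. Taken literally, \eqref{IntegrationByParts2} is false: on an interval $(a,b)\subset\Pi_i^c$ with $P_{r_i}\equiv c$, the left side equals $c\,(\psi(b)-\psi(a))$. These terms cancel across pieces only because $P_{r_i}$ is continuous, which the paper never states.

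\textbf{Your route.} You first prove that $P_{r_i}$ is absolutely continuous. That settles the boundary-term issue and works for any switching set, including infinitely many or Cantor-like switching instants. You then identify the a.e.\ derivative pointwise: on $\Pi_i$ via the density-point lemma, and on the open set $\Pi_i^c$ via local constancy.

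\textbf{What each buys.} Your version makes the hypothesis on $u$ explicit (local absolute continuity), which the paper leaves implicit by writing $\dot u$. It also shows that \eqref{Eq:SecControl:DerivativePlayOperator} is an a.e.\ statement. The price is that you rely on standard play-operator facts from the hysteresis literature: the semigroup property, $1$-Lipschitz continuity in the sup norm, and constancy while $|u-\nu_{r_i}|<r_i$. You cannot extract these from \eqref{Eq:SubSecModel:PlayOperator} alone, because for continuous inputs that formula is only a consistency condition.

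\textbf{One caution on your first step.} Your stated bound $|\nu_{r_i}(t)-\nu_{r_i}(s)|\le \max_{\tau\in[s,t]}|u(\tau)-u(s)|$ is true, but your justification does not reach it. A single projection step from $s$ to $t$ would only suggest $|\nu_{r_i}(t)-\nu_{r_i}(s)|\le|u(t)-u(s)|$, and that is false. For example, take $\nu_{r_i}(s)=u(s)-r_i$ and let $u$ rise by $3r_i$ and return: the state ends $2r_i$ higher while $u(t)=u(s)$. Justify the bound instead by working on $[s,t]$. Compare the play output for $u$ with the constant output $\nu_{r_i}(s)$ produced by the constant input $u(s)$, and apply the sup-norm Lipschitz property.
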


\begin{proof}
By definition, the weak derivative of a generic function $f:\Omega \to \mathbb{R}$, $f \in \mathcal{L}_1[\Omega]$, $\Omega \subseteq \mathbb{R}_{\geq 0}$, is a function $g:\Omega \to \mathbb{R}$ that satisfies \citep{knabner2003numerical}
\begin{align}
\label{WeakDerivative}
    \int_{\Omega} f(t)\dfrac{d\bm{\psi}(t)}{dt}dt &= - \int_{\Omega} g(t) \bm{\psi}(t) dt,
\end{align}
for every $\bm{\psi}(t):\Omega \to \mathbb{R}$ with compact support in $\Omega$.

As described in Section \ref{SubSec:Modeling} and illustrated in Fig.~\ref{Fig:SubSecModel:PlayOperator}, the state of the play operator \eqref{Eq:SubSecModel:PlayOperator} alternates between the linear and play regions. The weak derivative of a play operator can then be obtained by considering two sets that encompass these regions \citep{Esbrook_etal2014}. Defining $\Pi_i$ as the set of \GR{time instants} where $P_{r_i}(t)$ \GR{is} in the linear region and $\Pi_i^c$ as \GR{its} complement, with $\mathbb{R}_{\geq 0} = \Pi_i \cup \Pi_i^c$, one can split the following integral into two parts:
\begin{align}
\label{EqSplitDomain}
     \int_{\mathbb{R}_{\geq 0}}P_{r_i}\dfrac{d\bm{\psi}}{dt}dt = \int_{\Pi_i}P_{r_i}\dfrac{d\bm{\psi}}{dt}dt {+} \int_{\Pi_i^c}P_{r_i}\dfrac{d\bm{\psi}}{dt}dt.
\end{align}
\GR{Given that} the weak derivative of a function \GR{coincides with} its ordinary derivative whenever the latter exists, one may propose \eqref{Eq:SecControl:DerivativePlayOperator} as the weak derivative of $P_{r_i}$, \GR{resulting in}
\begin{align}
\label{IntegrationByParts1}
    \int_{\Pi_i}P_{r_i}\dfrac{d\bm{\psi}}{dt}dt &= - \int_{\Pi_i}\partial^{1}P_{r_i}\,{\bm{\psi}}\,dt = - \int_{\Pi_i}\dot{u}\,{\bm{\psi}}\,dt,\\
    \label{IntegrationByParts2}
    \int_{\Pi_i^c}P_{r_i}\dfrac{d\bm{\psi}}{dt}dt &= - \int_{\Pi_i^c}\partial^{1}P_{r_i}\,{\bm{\psi}}\,dt = 0.
\end{align}
Consequently, from \eqref{EqSplitDomain}, \eqref{IntegrationByParts1}, and \eqref{IntegrationByParts2}, it follows that
\begin{align}
     \int_{\mathbb{R}_{\geq 0}}P_{r_i}\dfrac{d\bm{\psi}}{dt}dt = - \int_{\Pi_i}\dot{u}\,{\bm{\psi}}\,dt, 
\end{align}
which proves that \eqref{Eq:SecControl:DerivativePlayOperator} is the weak derivative of the Prandtl-Ishlinskii operator \eqref{Eq:SubSecModel:PlayOperator}. \qed
\end{proof}

\GR{Consequently, from} \eqref{Eq:SecControl:DerivativePIOperatorVector}, we have
\begin{align}
\label{WeakDerivativeV}
	\partial^1\nu(t) &=  \Big[\theta_1\; \theta_2 \cdots \theta_{n_{\rm r}}\Big]\Big[\partial^1{P}_{r_1} \;\partial^1{P}_{r_2} \cdots \partial^1{P}_{r_{n_{\rm r}}}\Big]^T,
\end{align}
which, based on \eqref{EquationDerivativeOperator} and \eqref{Eq:SecControl:DerivativePlayOperator}, can be rewritten as
\begin{align}
\label{DerivativeControlInputs}
    	\partial^1\nu(t) =  \bar{\theta}(t)\,\partial^1 u(t),
\end{align}
where $\bar{\theta}(t)$ is defined as the sum of the weights of all play operators that lie in the linear region at time $t$\GR{, that is,} 
%
	$\bar{\theta}(t) = \big[\sum_{i}\theta_i \;:\; P_{r_i} \in \Pi_i \big]\!$.
%
\GR{It is worth mentioning} that, for the hysteresis operator \eqref{Eq:SubsecModel:PIOperatorVector}, $\bar{\theta}(t)$ can only take values within a bounded interval, as detailed in \citep{Abreu_etal2024_robust}.
Consequently, in the context of robust control design, $\bar{\theta}(t)$ can be treated as a bounded uncertain parameter\GR{, in which}
\begin{align}
	\label{Eq:SecControl:IntervalWeightSet}
	\bar{\theta}(t) \in \Big[\bar{\theta}_{\min}, ~\bar{\theta}_{\max}\Big]\!.
\end{align}

 \begin{remark}
    When only one operator lies within the linear region, as discussed in Remark \ref{RemarkCtrb}, the bounded uncertain parameter attains its minimum value, $\bar{\theta}(t) = \bar{\theta}_{\min} = \theta_{1}$. Conversely, when all operators are within the linear region\GR{, it} reaches its maximum value, $\bar{\theta}(t) = \bar{\theta}_{\max} = \sum_{i=1}^{n_{\rm r}}\theta_i$.
 \end{remark}

\GR{By applying} the variable transformation \eqref{StatesTransformedSystem}–\eqref{StatesTransformedSystem2} to system \eqref{Eq:SecControl:ErrorDynamicModel}, and defining \GR{the state vector}
\[
\bm{\xi} {=} \begin{bmatrix}
		\tilde{d}  &
		v &
		\eta
	\end{bmatrix}^T,
\]
the tracking-error dynamics can be written \GR{in the following form, utilizing \eqref{DerivativeControlInputs}–\eqref{Eq:SecControl:IntervalWeightSet}:}
\begin{align}
\label{Eq:SecControl:ErrorModelClosedLoop} 
	\partial^1\bm{\xi}(t) = \bm{\tilde{A}}\bm{\xi}(t) + \bm{\tilde{B}}\big(\bar{\theta}\,\partial^1{u}(t) + \partial^{1}\zeta(t)\big),
\end{align}
\GR{where}
\vspace{-2mm}
\begin{align*}
	\bm{\tilde{A}} {=} \begin{bmatrix}
		0  & 1 & 0 \\
		0  & 0 & 1 \\
		0  & {-}a_0 & {-}a_1
	\end{bmatrix},\quad
	\bm{\tilde{B}} {=} \begin{bmatrix}
		0  \\
		0 \\
		b_0
	\end{bmatrix},
\end{align*}
\GR{and} the uncertain parameter $\bar{\theta}(t)$ belongs to the bounded interval
\eqref{Eq:SecControl:IntervalWeightSet}.

\begin{theorem}
\label{Teorema1}
    Suppose $(\bm{X},\bm{Y})$, with $\bm{X} \in \mathbb{R}^{3 \times 3}$ and $\bm{Y} \in \mathbb{R}^{1 \times 3}$, is a solution \GR{to} the following semidefinite program:
        \begin{gather}
        \label{LMIWinf}
        \min_{\bm{X},\bm{Y}} \gamma^*, \\
    \text{s.t. }\begin{cases}   
    \bm{X} > 0, \\
    \begin{bmatrix}
        \bm{\Psi}_1 \!\!&\bm{\tilde{B}}\!\!& \bm{X}\!\! & \!\!\ZerosMatrix{}{} \\
        * & {-}\gamma^* & \bm{\lambda}\bm{X} {+} \bar{\theta}b_0\bm{Y}\!\! & \!\!b_0 \\
        * & * & {-}\bm{\Psi}_2^{-1}\!\! & \!\!\ZerosMatrix{}{} \\
        * & * & * \!\!& \!\!{-}\bm{\Gamma}_3^{-1}
    \end{bmatrix} < 0,\; \forall \bar{\theta} \in \Big[\bar{\theta}_{\min}, \bar{\theta}_{\max}\Big],
    \end{cases} \nonumber
    \end{gather}
    where the entries $*$ are determined by symmetry, $\bm{\Psi}_1 \triangleq \bm{X}\bm{\tilde{A}}^T + \bm{\tilde{A}}\bm{X} + \bm{Y}^T\bm{\tilde{B}}^T \bar{\theta} + \bar{\theta}\bm{\tilde{B}} \bm{Y}$, $\bm{\Psi}_2 \triangleq \text{blkdiag}(\Gamma_0,\Gamma_1,\Gamma_2)$, and $\bm{\lambda} \triangleq \begin{bmatrix} 0 & -a_0 & -a_1 \end{bmatrix}$. Moreover, assume $\zeta(t) \in  \mathcal{W}_{1,2}[\mathbb{R}_{\geq 0})$. Then\GR{,} the optimal control law
    \begin{align}
        \label{IntOptCtrlLaw}
        \bm{u}^*(t) = \bm{Y}\bm{X}^{-1}\bm{x}(t) = \bm{K}\bm{x}(t),
    \end{align}
    applied to system \eqref{Eq:SecControl:ErrorModelClosedLoop}, with $\bm{x} = [\tilde{\epsilon}(t) \; \tilde{d}(t) \; v(t)]^T$, satisfies \eqref{CostFunctional22} with $\gamma^2 = \gamma^*$ and guarantees the asymptotic stability of the closed-loop system. 
\end{theorem}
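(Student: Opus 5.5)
The argument is a standard quadratic-Lyapunov dissipativity argument applied to the weak-derivative model \eqref{Eq:SecControl:ErrorModelClosedLoop}, followed by a congruence/Schur-complement reduction to the LMI \eqref{LMIWinf}.

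\textbf{Closing the loop.} Note that $\bm{\xi}=[\tilde d\; v\; \eta]^T$ is exactly the weak derivative of $\bm{x}=[\tilde\epsilon\;\tilde d\; v]^T$. Hence $u=\bm{K}\bm{x}$ gives $\partial^1 u=\bm{K}\bm{\xi}$, and \eqref{Eq:SecControl:ErrorModelClosedLoop} becomes the linear parameter-varying system
\[
\partial^1\bm{\xi}=(\bm{\tilde A}+\bar\theta\bm{\tilde B}\bm{K})\bm{\xi}+\bm{\tilde B}\,\partial^1\zeta .
\]
In this system $\partial^1\zeta$ plays the role of the exogenous input, with $\bar\theta(t)$ ranging over \eqref{Eq:SecControl:IntervalWeightSet}.

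\textbf{Cost variable.} By \eqref{StatesTransformedSystem}--\eqref{StatesTransformedSystem2}, the integrand of $\|z\|^2_{\mathcal{W}_{3,2,\bm\Gamma}}$ is
\[
\bm\xi^T\bm\Psi_2\bm\xi+\Gamma_3(\partial^3 z)^2,
\qquad
\partial^3 z=(\bm\lambda+\bar\theta b_0\bm K)\bm\xi+b_0\partial^1\zeta .
\]
\textbf{Lyapunov function and dissipation inequality.} Take $V=\bm\xi^T\bm P\bm\xi$ with $\bm P=\bm X^{-1}>0$, and aim to show
\[
\partial^1 V+\bm\xi^T\bm\Psi_2\bm\xi+\Gamma_3(\partial^3 z)^2-\gamma^*(\partial^1\zeta)^2<0
\]
for all $(\bm\xi,\partial^1\zeta)\neq 0$. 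Writing this as a quadratic form in $[\bm\xi^T\;\partial^1\zeta]^T$ gives the matrix
\[
\begin{bmatrix}\bm A_{cl}^T\bm P+\bm P\bm A_{cl}&\bm P\bm{\tilde B}\\ *&-\gamma^*\end{bmatrix}+\bm C^T\bm\Psi_2\bm C+\bm D^T\Gamma_3\bm D ,
\]
where $\bm C=[\bm I\;\bm 0]$ and $\bm D=[\bm\lambda+\bar\theta b_0\bm K\;\; b_0]$. Apply a Schur complement to pull out the terms $\bm\Psi_2$ and $\Gamma_3$, which produces the blocks $-\bm\Psi_2^{-1}$ and $-\bm\Gamma_3^{-1}$. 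Then apply the congruence $\mathrm{blkdiag}(\bm X,1,\bm I,1)$ together with the change of variables $\bm Y=\bm K\bm X$; this reproduces exactly the matrix in \eqref{LMIWinf}, with $\bm\Psi_1$ equal to $\bm X\bm A_{cl}^T+\bm A_{cl}\bm X$.

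\textbf{Robustness over $\bar\theta$.} The constraint is affine in $\bar\theta$, so imposing it at the two vertices $\bar\theta_{\min}$ and $\bar\theta_{\max}$ suffices for the whole interval. This also covers time variation of $\bar\theta(t)$, because $\bm P$ is constant.

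\textbf{Integrating the inequality.} Integrate over $[0,T]$. The chain rule for $V$ is valid because $\bm\xi$ is absolutely continuous: its components are $\tilde d$, $v$ and $\eta$, and $\eta$ has an $\mathcal{L}_2$ weak derivative as long as $\zeta\in\mathcal{W}_{1,2}$ and $\dot u$ exists, which is guaranteed by Lemma \ref{Lemma2}. Integration gives
\[
\|z\|^2_{\mathcal{W}_{3,2,\bm\Gamma}}\le\gamma^*\|\partial^1\zeta\|^2_{\mathcal L_2}+V(\bm\xi(0))\le\gamma^*\|\zeta\|^2_{\mathcal{W}_{1,2}}+c ,
\]
which is \eqref{CostFunctional22} with $\gamma^2=\gamma^*$; Lemma \ref{Lemma1} then identifies this as the $\mathcal{W}_{3,2,\bm\Gamma}$-gain statement. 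Minimizing $\gamma^*$ gives the optimal bound.

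\textbf{Asymptotic stability.} Setting $\partial^1\zeta=0$ and taking the upper-left block shows $\partial^1 V\le-\bm\xi^T\bm\Psi_2\bm\xi<0$, so $\bm\xi\to0$ uniformly over admissible $\bar\theta(t)$. This gives $\tilde d,v\to0$. To obtain convergence of $\tilde\epsilon$, I would argue that $u=\bm K\bm x$ converges and the equilibrium is consistent (constant $\zeta$); this step deserves care.

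\textbf{Main obstacle.} The delicate step is justifying that $\bm\xi$ is differentiable almost everywhere and that the Lyapunov chain rule holds, despite the switching of $\bar\theta(t)$ between linear and play regions. I would handle this by noting that the play operator is Lipschitz, so $\nu$ is absolutely continuous whenever $u$ is; then $\bar\theta(t)\partial^1 u$ is a legitimate $\mathcal{L}_2$ weak derivative, and $V$ is absolutely continuous.
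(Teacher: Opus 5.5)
Your route is the paper's route: the Lyapunov function $\bm{\xi}^T\bm{X}^{-1}\bm{\xi}$, the dissipation inequality with the $\mathcal{W}_{3,2,\bm{\Gamma}}$ integrand, $\partial^1u=\bm{K}\bm{\xi}$, Schur complement plus congruence with $\bm{Y}=\bm{K}\bm{X}$, integration over $\mathbb{R}_{\geq 0}$, and then adding $\gamma^2\|\zeta\|^2_{\mathcal{L}_2}$. Two of your details are tidier than the paper's: you obtain $\partial^1u=\bm{K}\bm{\xi}$ by differentiating $u=\bm{K}\bm{x}$ rather than integrating, and you take $c=\mathcal{V}(\bm{\xi}(0))$ rather than identifying $\mathcal{J}^*_{\mathcal{W}_\infty}$ with $\mathcal{V}(0)-\lim_{t\to\infty}\mathcal{V}(t)$.

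\textbf{The step that fails.} You claim that the Schur complement and the congruence $\mathrm{blkdiag}(\bm{X},1,\bm{I},1)$ reproduce \eqref{LMIWinf} exactly. With your $\bm{C}=[\bm{I}\;\bm{0}]$ and $\bm{D}=[\bm{\lambda}+\bar\theta b_0\bm{K}\;\; b_0]$ they actually give
\[
\begin{bmatrix}
\bm{\Psi}_1 & \bm{\tilde{B}} & \bm{X} & (\bm{\lambda}\bm{X}+\bar{\theta}b_0\bm{Y})^T\\
* & -\gamma^* & \bm{0} & b_0\\
* & * & -\bm{\Psi}_2^{-1} & \bm{0}\\
* & * & * & -\Gamma_3^{-1}
\end{bmatrix}<0 .
\]
So the row $\bm{\lambda}\bm{X}+\bar{\theta}b_0\bm{Y}$ belongs, transposed, in block $(1,4)$, coupled to $\Gamma_3$. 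The theorem instead places it in block $(2,3)$, coupled to $\bm{\Psi}_2$, with a zero in $(1,4)$.

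These two LMIs are not equivalent. In the printed matrix, $\Gamma_3$ and $\gamma^*$ enter only through $-\gamma^*+b_0^2\Gamma_3$. Hence, for fixed $(\bm{X},\bm{Y})$, you can enlarge $\Gamma_3$ and $\gamma^*$ together and remain feasible. Meanwhile, the $(1,1)$ block of your quadratic form contains $\Gamma_3(\bm{\lambda}+\bar\theta b_0\bm{K})^T(\bm{\lambda}+\bar\theta b_0\bm{K})$, so it eventually loses negative definiteness unless that row vanishes. A solution of the LMI as stated therefore need not give your dissipation inequality.

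The paper's proof makes the same identification, so this is a defect of the stated LMI rather than of your method. What your argument, and the paper's, actually proves is the theorem with the matrix displayed above.

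\textbf{The stability claim.} The paper does not set $\partial^1\zeta=0$. It observes that the integrated bound makes $\|z\|_{\mathcal{W}_{3,2,\bm{\Gamma}}}$ finite for every $\zeta\in\mathcal{W}_{1,2}$. Hence $z,\partial^1z\in\mathcal{L}_2$, and therefore $z=\tilde d\to0$.

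Your unforced argument, giving exponential decay of $\bm{\xi}$ uniformly in $\bar\theta(t)$, is a valid internal-stability result. However, in the theorem's setting, $\partial^1\zeta=0$ together with $\zeta\in\mathcal{W}_{1,2}$ forces $\zeta\equiv0$. You should add the paper's one-line argument to cover the disturbed case.

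The convergence of $\tilde\epsilon$, which you leave open, is not needed, since the paper only claims $z\to0$. Moreover, under hysteresis the limit of $\tilde\epsilon$ (equivalently, of $u$) depends on the history of the play operators. So convergence of $\bm{x}$ to a fixed equilibrium should not be expected.
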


\begin{proof}
    Consider
    \begin{align}
        \mathcal{V}(t) = \bm{\xi}^T(t)\bm{X}^{-1}\bm{\xi}(t), \; \bm{X}^{-1} > 0, \label{Lyap1}
    \end{align}
    such that
    \begin{align}
        \label{Eq1OptCtrl}
        \partial^1\mathcal{V} &= \partial^1{\bm{\xi}}^T\bm{X}^{-1}\bm{\xi} + \bm{\xi}^T\bm{X}^{-1}\partial^1{\bm{\xi}} \\[0.5mm]
        &< -z^T\Gamma_0z - \big(\partial^1{z}\big)^T\Gamma_1\partial^1{z}
        - \big(\partial^2{z}\big)^T\Gamma_2\partial^2{z}\nonumber \\[-0.5mm]
        &- \big(\partial^3{z}\big)^T\Gamma_3\partial^3{z} + \gamma^* \big(\partial^1\zeta\big)^T\partial^1\zeta, \nonumber 
    \end{align}
    where $z$, $\partial^1{z}$, $\partial^2{z}$, and $\partial^3{z}$ are given in \eqref{StatesTransformedSystem}–\eqref{StatesTransformedSystem2}.
    Using the results of Lemma \ref{Lemma2} and proposing \GR{the optimal input rate as the linear state feedback control law}
    \begin{align}
        \partial^1\bm{u}^* = \bm{K}\bm{\xi}, \label{LeiCtrlOpt}    
    \end{align}
    it follows from \eqref{StatesTransformedSystem}–\eqref{StatesTransformedSystem2}, \eqref{DerivativeControlInputs}, and \eqref{Eq:SecControl:ErrorModelClosedLoop} that
    \begin{align}
        \partial^1{\bm{\xi}}(t) &=  \left(\bm{\tilde{A}} + \bm{\tilde{B}}\bar{\theta}\bm{K}\right)\bm{\xi}(t) + \bm{\tilde{B}}\partial^{1}\zeta(t), \label{Eq2OptCtrl} \\
        \partial^3{z} &= \big(\bm{\lambda}  + b_0\bar{\theta}\bm{K}\big)\bm{\xi} + b_0\partial^{1}\zeta(t), \label{Eq3OptCtrl}
    \end{align}
    with $\bm{\lambda} \triangleq \begin{bmatrix} 0  & {-}a_0 & {-}a_1\end{bmatrix}$. Using \eqref{Eq2OptCtrl} and \eqref{Eq3OptCtrl}, we expand \eqref{Eq1OptCtrl} as
   \begin{align}
    \nonumber&\bm{\xi}^T\!\left(\!\left(\bm{\tilde{A}} + \bm{\tilde{B}}\bar{\theta}\bm{K}\right)^T\bm{X}^{-1} + \bm{X}^{-1}\left(\bm{\tilde{A}} + \bm{\tilde{B}}\bar{\theta}\bm{K}\right) \!\right)\!\bm{\xi} \\
    &+  \big(\bm{\tilde{B}}\partial^1\zeta\big)^T \bm{X}^{-1}\bm{\xi} + \bm{\xi}^T\bm{X}^{-1}\bm{\tilde{B}}\partial^1\zeta \nonumber\\
        &< -\bm{\xi}^T\!\left(\!\big(\bm{\lambda}  {+} b_0\bar{\theta}\bm{K}\big)^T\Gamma_3\big(\bm{\lambda}  {+} b_0\bar{\theta}\bm{K}\big) + \bm{\Psi}_2\!\right)\!\bm{\xi} \nonumber\\
        &{-}\bm{\xi}^T\!\left(\!\left( \bm{\lambda}  {+} b_0\bar{\theta}\bm{K}\right)^T\Gamma_3b_0\!\right)\partial^{1}\zeta {-} \left(\partial^{1}\zeta\right)^T\!\left(\!b_0\Gamma_3\left( \bm{\lambda}  {+} b_0\bar{\theta}\bm{K}\right)\!\right)\bm{\xi} \nonumber\\
        & {+} \big(\partial^1\zeta\big)^T\!\left(\gamma^* {-}  b_0\Gamma_3 b_0\right)\!\partial^1\zeta < 0\GR{.} \label{Eq4OptCtrl}
    \end{align}
    \GR{Then,} by grouping the terms in \eqref{Eq4OptCtrl}\GR{,} we obtain the inequality
     \begin{align}
    \begin{bmatrix}
        \bm{\Psi}_3 \;\; & \;\; \bm{X}^{-1}\bm{\tilde{B}} + \left( \bm{\lambda}  {+} b_0\bar{\theta}\bm{K}\right)^T\Gamma_3b_0\\
        * & -\gamma^* +  b_0\Gamma_3 b_0
    \end{bmatrix} < 0,
    \end{align}
with $\bm{\Psi}_3 \triangleq \left(\bm{\tilde{A}} + \bm{\tilde{B}}\bar{\theta}\bm{K}\right)^T\bm{X}^{-1} + \bm{X}^{-1}\left(\bm{\tilde{A}} + \bm{\tilde{B}}\bar{\theta}\bm{K}\right) + \bm{\Psi}_2 + \big(\bm{\lambda}  + b_0\bar{\theta}\bm{K}\big)^T\Gamma_3\big(\bm{\lambda}  + b_0\bar{\theta}\bm{K}\big)$.
    
    \GR{By considering} a congruence transformation \GR{using} $\text{blkdiag}(\bm{X},\;\IdentityMatrix{})$, \GR{applying} the change of variables $\bm{Y} = \bm{K}\bm{X}$, and \GR{utilizing} the Schur complement \GR{on} \eqref{Eq4OptCtrl}, we derive the semidefinite program \eqref{LMIWinf}. Its solution guarantees that \GR{both} \eqref{Lyap1} and \eqref{Eq1OptCtrl} are satisfied. The \GR{rate of the} optimal control law is expressed in terms of the weak derivative $\partial^1\bm{u}^*$ of $\bm{u}^*$ \GR{in \eqref{LeiCtrlOpt}}. Therefore, integrating both sides of \eqref{LeiCtrlOpt} over \GR{the interval} $[0,\;t]$ yields \eqref{IntOptCtrlLaw}.
    
    To show that the optimal control law $\bm{u}^*$ obtained from \eqref{LMIWinf} constitutes a \GR{robust} $\mathcal{W}_\infty$ controller and guarantees the asymptotic stability of the closed-loop system at a desired reference value, we integrate both sides of \eqref{Eq1OptCtrl} over $\mathbb{R}_{\geq 0}$ to obtain
    \begin{align}
         \lim_{t \to \infty}\mathcal{V}(t) - \mathcal{V}(0) <& -||z(t)||^2_{\mathcal{W}_{3,2,\bm{\Gamma}}} + \gamma^2 \big\| \partial^1\zeta(t) \big\|^2_{\mathcal{L}_2}, \\[0.5mm]
         ||z(t)||^2_{\mathcal{W}_{3,2,\bm{\Gamma}}} <& \gamma^2 \big\| \partial^1\zeta(t) \big\|^2_{\mathcal{L}_2} + \mathcal{J}^*_{\mathcal{W}_\infty},
        \label{IneqStability}
    \end{align}
    where, according to Lemma \ref{Lemma1}, $\mathcal{J}^*_{\mathcal{W}_\infty} {=} \mathcal{V}(0) {-} \lim_{t \to \infty}\mathcal{V}(t)$ is the optimal cost of the optimal control law \eqref{OptimalControlProblem}. 
    Finally, adding the term $\gamma^2 \|\zeta(t)\|^2_{\mathcal{L}_2}$ to the right-hand side of \eqref{IneqStability} yields
    \small\begin{align}
    ||z(t)||^2_{\mathcal{W}_{3,2,\bm{\Gamma}}} <& \gamma^2 \|\partial^1\zeta(t)\|^2_{\mathcal{L}_2} + \mathcal{J}^*_{\mathcal{W}_\infty}+ \gamma^2 \|\zeta(t)\|^2_{\mathcal{L}_2}, \\[0.5mm]
         <& \gamma^2 \|\zeta(t)\|^2_{\mathcal{W}_{1,2}} + \mathcal{J}^*_{\mathcal{W}_\infty}.
        \label{IneqStability2}
    \end{align}\normalsize
    Since the optimal control problem is feasible, we have $\mathcal{J}^*_{\mathcal{W}_\infty} \in \mathbb{R}$. Moreover, under the assumption $\zeta(t) \in \mathcal{W}_{1,2}[\mathbb{R}_{\geq 0})$, it follows from \eqref{IneqStability} that there exists $c \in \mathbb{R}_{>0}$ such that $\|z(t)\|^2_{\mathcal{W}_{3,2,\bm{\Gamma}}} < c$. This implies $\lim_{t\to \infty}z(t) = 0$, ensuring the asymptotic stability of the closed-loop system. \qed
\end{proof}



\vspace{-2mm}
\section{NUMERICAL RESULTS}
\label{Sec:Resultados}
\vspace{-2mm}

This section \GR{details} the results of a numerical experiment \GR{conduct} to corroborate the theoretical formulation. \GR{We considered} the dynamic system \eqref{Eq:SubSecModel:HystereticDynamicSystem} \GR{as a model of} a piezoelectric actuator with constants $a_0 = 4$, $a_1 = 2$, and $b_0 = 1$ \citep{Abreu_etal2024_robust}. The hysteresis behavior was modeled by the Prandtl-Ishlinskii operator \eqref{Eq:SubsecModel:PIOperatorVector}\GR{, which incorporates} five play operators\GR{. The} radii \GR{were set as} $r_1 = 0$, $r_2 = 0.63$, $r_3 = 1.27$, $r_4 = 2.54$, and $r_5 = 4.45$, \GR{with weights} $\bm{\theta}=[5.88, 1.58, 0.47, 0.98, 0.40]^T$ \citep{Edardar_etal2014}. \GR{Based on these parameters, we obtained} $\bar{\theta}_{\min} = 5.88$ and $\bar{\theta}_{\max} = 9.31$. The design of the $\mathcal{W}_\infty$ controller was carried out using Theorem \ref{Teorema1}, and the solution of the optimal control problem \eqref{LMIWinf} was obtained with Matlab$^{\copyright}$ using the Yalmip\footnote{\url{https://yalmip.github.io}} 
and Mosek\footnote{\url{https://www.mosek.com}} toolboxes. The controller was initially tuned following Bryson’s method, and then fine-tuned, resulting in $\Gamma_0 {=} 1000$, $\Gamma_1 {=} 45$, $\Gamma_2 {=} 50$, $\Gamma_3 {=} 20$, $\bm{K} = \bm{Y}\bm{X}^{-1} = \begin{bmatrix} 20.24 & 37.58 & 2.26 \end{bmatrix}$, and $\gamma^* = 78.53$.
The control law \eqref{IntOptCtrlLaw} was then applied to system \eqref{Eq:SubSecModel:HystereticDynamicSystem}, and the results are shown in Fig. \ref{Fig:Controle}.

\begin{figure}[htb]
\centering{
\def\svgwidth{1\columnwidth}
\tiny{
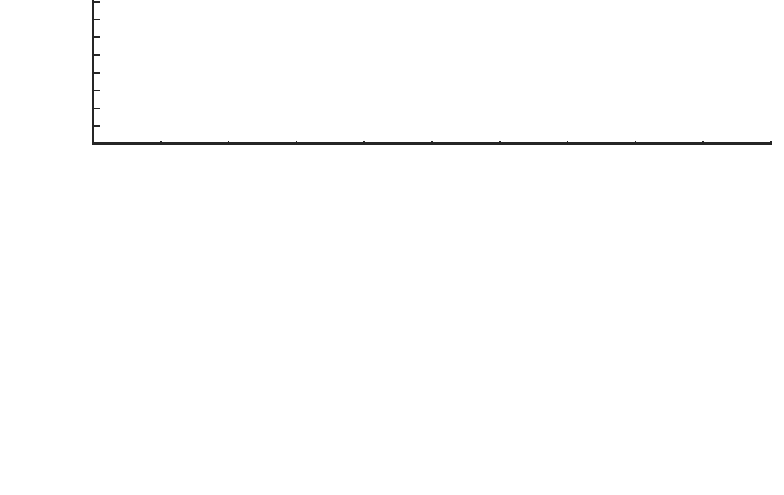\vspace{-2mm}}
\caption{Time evolution of the reference $d_r(t)$, the position $d(t)$ of the piezoelectric actuator\GR{, the control input $u(t)$, and the disturbance $\zeta(t)$.}}
\label{Fig:Controle}
}
\end{figure}

As \GR{demonstrated} in Theorem \ref{Teorema1}, the proposed robust \GR{optimal} $\mathcal{W}_\infty$ controller \GR{successfully attenuated} the effects of hysteresis nonlinearity while ensuring \GR{the} asymptotic stability of the closed-loop system at the desired reference value. Moreover, it \GR{effectively} attenuated sinusoidal disturbances and rejected step-type perturbations in the actuator positioning.

\vspace{-2mm}
\section{Conclusion}
\vspace{-2mm}
\label{Conclusao}

This paper proposed a \GR{robust optimal} $\mathcal{W}_\infty$ controller for \GR{dynamical systems with hysteresis behavior modeled by the} Prandtl--Ishlinskii operator. \GR{By leveraging} weighted Sobolev norms and weak derivatives, \GR{we recast} the \GR{non-differentiable}, input \GR{non-affine} hysteresis as a bounded uncertainty multiplying the input rate, \GR{which enabled a direct} LMI-based synthesis. \GR{We provided a} rigorous stability analysis \GR{for} the resulting closed-loop system, demonstrating that the \GR{proposed} controller guarantees $\mathcal{W}_{3,2,\Gamma}$-stability with an explicit $\mathcal{W}_\infty$-gain bound. 
A numerical study on a piezoelectric actuator validated the theoretical development. The proposed controller \GR{successfully} achieved asymptotic tracking while \GR{simultaneously} attenuating \GR{both} the effects of hysteresis and external disturbances. Future work \GR{will focus on} implementing the proposed control strategy on a physical hysteretic system, extending the formulation to handle systems described by the Bouc–Wen model, and performing \GR{comprehensive} comparative studies against existing \GR{robust control} approaches.


\vspace{-2mm}
\bibliography{references,ifacconf}              
                        
\end{document}